\def\th@plain{%
  \upshape 
}
\renewenvironment{proof}[1][\proofname]{\par
  \pushQED{\qed}%
  \normalfont \topsep6\p@\@plus6\p@\relax
  \trivlist
  \item[\hskip\labelsep
        \bfseries
    #1\@addpunct{.}]\ignorespaces
}{%
  \popQED\endtrivlist\@endpefalse
}
\newtheorem{theorem}{Theorem}
\numberwithin{theorem}{section}
\newtheorem{corollary}{Corollary}
\newtheorem{proposition}{Proposition}
\newtheorem{conjecture}{Conjecture}
\newtheorem*{conjecture*}{Conjecture}
\newtheorem{claim}{Claim}
\theoremstyle{definition}
\newcounter{Hcase}
\newcounter{Hclaim}
\newcommand{\resetcounter}{\stepcounter{Hcase}\setcounter{case}{0}\stepcounter{Hclaim}\setcounter{claim}{0}}
\newcommand{\etal}{et~al.\ }
\def\int(#1){\mathrm{int}(#1)}
\def\ext(#1){\mathrm{ext}(#1)}
\def\Int(#1){\mathrm{Int}(#1)}
\def\Ext(#1){\mathrm{Ext}(#1)}
\def\ad(#1){\mathrm{ad}(#1)}
\def\mad(#1){\mathrm{mad}(#1)}
\def\la(#1){\mathrm{la}(#1)}
\begin{document}%
\title{List strong edge coloring of some classes of graphs}
\author{Watcharintorn Ruksasakchai\,\textsuperscript{a}\ \ \ Tao Wang\,\textsuperscript{b, }\footnote{{\tt Corresponding
author: wangtao@henu.edu.cn; iwangtao8@gmail.com}}\\
{\small \textsuperscript{a}Department of Mathematics Statistics and Computer Science}\\
{\small Faculty of Liberal Arts and Science, Kasetsart University}\\
{\small Kamphaeng Saen Campus, Nakhon Pathom, 73140, Thailand}\\
{\small \textsuperscript{b}Institute of Applied Mathematics}\\
{\small Henan University, Kaifeng, 475004, P. R. China}}
\date{March 29, 2017}
\maketitle

\begin{abstract}%
A {\em strong edge coloring} of a graph is a proper edge coloring in which every color class is an induced matching. The {\em strong chromatic index} of a graph is the minimum number of colors needed to obtain a strong edge coloring. In an analogous way, we can define the list version of strong edge coloring and list version of strong chromatic index. In this paper, we prove that if $G$ is a graph with maximum degree at most four and maximum average degree less than $3$, then the list strong chromatic index is at most $3\Delta + 1$, where $\Delta$ is the maximum degree of $G$. In addition, we prove that if $G$ is a planar graph with maximum degree at least $4$ and girth at least $7$, then the list strong chromatic index is at most $3\Delta$, where $\Delta$ is the maximum degree of $G$.

\end{abstract}
\section{Introduction}
A {\em proper edge coloring} of a graph is an assignment of colors to the edges such that adjacent edges receive distinct colors. The {\em chromatic index} $\chiup'(G)$ of a graph $G$ is the minimum number of colors needed to obtain a proper edge coloring of $G$. We denote the minimum and maximum degrees of vertices in $G$ by $\delta(G)$ and $\Delta(G)$, respectively. The well-known result on edge coloring is Vizing's theorem, which says that $\Delta(G) \leq \chiup'(G) \leq \Delta(G) + 1$.

An edge coloring is a {\em strong edge coloring} if every color class is an induced matching. That is, an edge coloring is {\em strong} if for each edge $uv$, the color of $uv$ is distinct from the colors of the edges (other than $uv$) incident with $N_{G}(u) \cup N_{G}(v)$, where $N_{G}(u)$ and $N_{G}(v)$ respectively denote the neighborhood of $u$ and $v$ in $G$. The {\em strong chromatic index} $\chiup_{s}'(G)$ of a graph $G$ is the minimum number of colors needed to obtain a strong edge coloring of $G$. The concept of strong edge coloring was introduced by Fouquet and Jolivet \cite{MR737086, MR776805}.

For a graph with maximum degree at most two, we can easily obtain the following result.
\begin{proposition}\label{Prop}%
If $G$ is a graph with maximum degree one, then $\chiup_{s}'(G) \leq 1$. If $G$ is a graph with maximum degree two, then $\chiup_{s}'(G) \leq 5$.
\end{proposition}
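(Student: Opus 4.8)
The first claim is essentially immediate. If $\Delta(G) \leq 1$, then every component of $G$ is a single vertex or a single edge, so $G$ is a matching. Any two distinct edges of a matching are vertex-disjoint and have no edge joining their endpoints, so they never conflict; assigning a single color to all edges therefore yields a valid strong edge coloring, and $\chiup_{s}'(G) \leq 1$.

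For the second claim, my plan is to bound the number of edges that can conflict with a fixed edge and then color greedily. Recall from the definition that an edge $uv$ conflicts only with edges (other than itself) incident with $N_{G}(u) \cup N_{G}(v)$. The crucial step is the following count: when $\Delta(G) \leq 2$, let $u'$ denote the neighbor of $u$ other than $v$ (if it exists) and $v'$ the neighbor of $v$ other than $u$. Then $N_{G}(u) \cup N_{G}(v) \subseteq \{u, v, u', v'\}$, and since each of these four vertices has degree at most two, the edges incident with them, apart from $uv$ itself, are at most $uu'$, $vv'$, the second edge at $u'$, and the second edge at $v'$. Hence $uv$ conflicts with at most four other edges.

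I would formalize this by introducing the conflict graph $H$ whose vertices are the edges of $G$, with two vertices adjacent exactly when the corresponding edges conflict; a strong edge coloring of $G$ is then precisely a proper vertex coloring of $H$. The count above shows $\Delta(H) \leq 4$, so a greedy coloring of $H$ uses at most $\Delta(H) + 1 \leq 5$ colors, giving $\chiup_{s}'(G) \leq 5$.

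I do not expect any serious obstacle here; the only thing to handle with care is the degenerate cases in the count, namely endpoints of degree one or the vertices $u'$ and $v'$ coinciding or being adjacent (as happens in short cycles). In every such case the number of conflicting edges can only decrease, so the bound of four, and hence the bound of five colors, still holds. It is worth remarking that the value $5$ is sharp: for $C_{5}$ the five edges pairwise conflict, so its conflict graph is $K_{5}$ and five colors are genuinely required, which is why the constant cannot be lowered.
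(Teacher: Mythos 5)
Your proof is correct. The paper states this proposition without proof (it is dismissed as easy), so there is nothing to compare against; your argument --- reduce to a proper vertex coloring of the conflict graph, observe that when $\Delta(G) \leq 2$ each edge $uv$ conflicts with at most the four edges $uu'$, $vv'$ and the second edges at $u'$ and $v'$, and color greedily with $4 + 1 = 5$ colors --- is the standard one, handles the degenerate cases correctly, and your remark that $C_{5}$ shows the constant $5$ is sharp is also right.
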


In 1985, Erd{\H{o}}s and Ne{\v s}et{\v r}il constructed graphs with strong chromatic index $\frac{5}{4}\Delta(G)^{2}$ when $\Delta(G)$ is even, and $\frac{1}{4}(5\Delta(G)^{2} - 2 \Delta(G) + 1)$ when $\Delta(G)$ is odd. Inspired by their construction, they proposed the following strong edge coloring conjecture during a seminar in Prague.
\begin{conjecture}[Erd{\H{o}}s and Ne{\v s}et{\v r}il \cite{MR975526}]\label{SECC}%
If $G$ is a graph with maximum degree $\Delta$, then
\[
\chiup_{s}'(G) \leq
\begin{cases}
\frac{5}{4} \Delta^{2}, & \text{if $\Delta$ is even;}\\
\frac{1}{4}(5\Delta^{2} - 2 \Delta + 1), & \text{if $\Delta$ is odd.}
\end{cases}
\]
\end{conjecture}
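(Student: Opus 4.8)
The plan is to recast the problem as an ordinary vertex coloring. A strong edge coloring of $G$ is exactly a proper vertex coloring of the square $L(G)^{2}$ of the line graph, so $\chiup_{s}'(G) = \chiup(L(G)^{2})$. Two edges of $G$ conflict precisely when they lie within distance two in $L(G)$, that is, when they share an endpoint or are joined by a third edge. Counting locally, each edge $uv$ meets at most $\Delta-1$ other edges at $u$ and $\Delta-1$ at $v$, and each of those meets at most $\Delta-1$ further edges, so the maximum degree of this conflict graph is at most $2\Delta(\Delta-1)$. A greedy coloring then gives the baseline $\chiup_{s}'(G) \le 2\Delta^{2}-2\Delta+1$. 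The entire content of the conjecture is to replace the leading constant $2$ by $\tfrac54$, a value that the balanced blow-ups of $C_{5}$ (replace each vertex of $C_{5}$ by an independent set of size $\Delta/2$ and join consecutive sets completely) show to be best possible, since there all $5(\Delta/2)^{2} = \tfrac54\Delta^{2}$ edges pairwise conflict.

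For large $\Delta$ I would attack the leading constant by the semi-random (R\"odl nibble) method combined with the Lov\'asz Local Lemma. The point is that the conflict graph of $L(G)^{2}$, although of maximum degree close to $2\Delta^{2}$, is far from a clique: two edges that are both at distance two from $uv$ but far from each other need not conflict, so each conflict neighbourhood contains relatively few edges inside it. I would assign each edge a tentative color uniformly at random from a palette of size $(2-\varepsilon)\Delta^{2}$, keep a color only when no conflicting edge chose the same one, and bound the number of surviving conflicts at each vertex using Talagrand-type concentration; the local sparsity is exactly what lets the Local Lemma certify that a proper partial coloring can be extended, and iterating this nibble on the uncolored remainder drives the constant strictly below $2$.

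For the small values $\Delta \in \{3,4\}$ I would instead argue structurally. Assuming a minimal counterexample, I would build a list of reducible configurations—short paths, low-degree neighbourhoods, and local patterns around a chosen edge whose colors can always be completed from the allotted $\tfrac14(5\Delta^{2}-2\Delta+1)$ colors—and then derive a contradiction by a counting or discharging argument on the forced local structure. This mirrors how the case $\Delta=3$ (the bound $10$) is handled and is the natural route to pin down the exact extremal constant when $\Delta$ is too small for the probabilistic machinery to be tight.

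The hard part, and the reason this statement is in fact the celebrated open conjecture of Erd{\H o}s and Ne{\v s}et{\v r}il rather than a theorem, is reaching the constant $\tfrac54$ \emph{exactly} and \emph{uniformly} in $\Delta$. The nibble together with the Local Lemma loses a fixed constant factor in every concentration estimate, so it yields some constant strictly between $\tfrac54$ and $2$ but provably cannot reach $\tfrac54$ with present techniques; matching the $C_{5}$-blow-up extremal example would require a local analysis that is essentially lossless and that recognizes the blow-up structure as the unique obstruction. Bridging this gap—combining a genuinely tight probabilistic argument for large $\Delta$ with exhaustive structural reducibility for each remaining small $\Delta$—is precisely the obstacle that has kept the conjecture open, and I would not expect the plan above to close it without a substantially new idea at the concentration step.
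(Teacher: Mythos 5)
This statement is \autoref{SECC} of the paper, i.e.\ the Erd\H{o}s--Ne\v{s}et\v{r}il conjecture: the paper states it as a conjecture and contains no proof of it, so there is nothing in the paper to compare your argument against, and no complete proof could reasonably be expected. You correctly recognize this, and the parts of your writeup that are checkable are accurate: a strong edge coloring of $G$ is exactly a proper coloring of the square of the line graph; the conflict degree of an edge is at most $2(\Delta-1) + 2(\Delta-1)^{2} = 2\Delta(\Delta-1)$, so greedy gives $\chiup_{s}'(G) \leq 2\Delta^{2} - 2\Delta + 1$; and the balanced blow-up of $C_{5}$ (independent sets of size $\Delta/2$, consecutive sets joined completely) is precisely the Erd\H{o}s--Ne\v{s}et\v{r}il construction showing the bound $\frac{5}{4}\Delta^{2}$ would be tight for even $\Delta$ (an unbalanced blow-up gives the odd case $\frac{1}{4}(5\Delta^{2}-2\Delta+1)$).

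The genuine gap is the one you name yourself, and it is worth being concrete about why it is real. The nibble-plus-Local-Lemma route has been carried out in the literature (Molloy and Reed obtained $1.998\Delta^{2}$ for large $\Delta$, with subsequent improvements to roughly $1.772\Delta^{2}$), but every known implementation loses constant factors in the concentration step and lands strictly above $\frac{5}{4}$; nothing in your sketch suggests how to make that analysis lossless or how to see the $C_{5}$-blow-up as the unique extremal obstruction, which is exactly what a proof would require. Likewise your structural plan for small $\Delta$ matches how $\Delta = 3$ was settled (the bound $10$, by Andersen and by Hor\'ak \etal, cited in the paper), but already for $\Delta = 4$ the discharging/reducibility machinery has only reached $21$ against the conjectured $20$, so the exact constant resists even exhaustive local analysis there. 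In short: your submission is a correct and honest situation report on an open problem rather than a proof, your two sound ingredients (the greedy upper bound and the extremal example) do establish that the conjectured bound lies between $\frac{5}{4}\Delta^{2}$ and $2\Delta^{2}-2\Delta+1$, and the missing idea --- a concentration argument tight enough to reach the constant $\frac{5}{4}$ --- is precisely what has kept \autoref{SECC} open since 1985.
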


Andersen \cite{MR1189847} and Hor{\' a}k \etal \cite{MR1217390} independently confirmed the conjecture for $\Delta = 3$. Kostochka \etal \cite{MR3398865} proved that the strong chromatic index of a subcubic planar multigraph without loops is at most $9$. Some other classes of graphs have been investigated, such as degenerate graphs \cite{MR3056878, 2012arXiv1212.6092L, MR3279333, MR3386050, MR3208079} and Halin graphs \cite{MR2889505, MR2899886, MR2888094, MR2227736, MR2514906}. 

The {\em degree} of a vertex $v$ in $G$, denoted by $\deg(v)$, is the
number of incident edges of $v$ in $G$. A vertex of degree $k$, at most $k$ and at least $k$ is called a $k$-vertex, $k^{-}$-vertex and $k^{+}$-vertex, respectively. A {\em $k_{t}$-vertex} is a $k$-vertex adjacent to exactly $t$ vertices of degree two. Two distinct edges $e_{1} = uv$ and $e_{2}$ are {\em within distance two}, if $e_{2}$ is incident with at least one vertex in $N_{G}(u) \cup N_{G}(v)$. The {\em girth} of a graph $G$ is the length of a shortest cycle in $G$; if $G$ has no cycle we define the girth of $G$ to be $\infty$.

The {\em maximum average degree} $\mad(G)$ of a graph $G$ is the largest average degree of its subgraphs, that is,
\[
\mad(G) = \max_{H \subseteq G} \left\{\frac{2|E(H)|}{|V(H)|}\right\}.
\]

Hocquard \etal \cite{MR2823923, MR3101726} studied the strong chromatic index of subcubic graphs in terms of maximum average degree. The following result was proved in \cite{MR3101726} and it is the best result in the literature.
\begin{theorem}[Hocquard \etal \cite{MR3101726}]
  If $G$ is a subcubic graph, then
  \begin{enumerate}[label = (\alph*)]
    \item $\chiup_{s}'(G) \leq 6$ when $\mad(G) < \frac{7}{3}$;
    \item $\chiup_{s}'(G) \leq 7$ when $\mad(G) < \frac{5}{2}$;
    \item $\chiup_{s}'(G) \leq 8$ when $\mad(G) < \frac{8}{3}$;
    \item $\chiup_{s}'(G) \leq 9$ when $\mad(G) < \frac{20}{7}$.
  \end{enumerate}
\end{theorem}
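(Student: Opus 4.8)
The four parts share one template, so the plan is to prove them in parallel, writing $(k,c)$ for the pair consisting of the maximum-average-degree threshold and the color budget, so that $(k,c) \in \{(\tfrac{7}{3},6),\ (\tfrac{5}{2},7),\ (\tfrac{8}{3},8),\ (\tfrac{20}{7},9)\}$. For a fixed part I would take a counterexample $G$ minimizing $|E(G)|$. Since $G$ is a subgraph of itself, the hypothesis $\mad(G) < k$ gives $2|E(G)| < k\,|V(G)|$, while every proper subgraph $H$ of $G$ satisfies $\chiup_{s}'(H) \leq c$ by minimality; in particular a strong edge coloring of $G - e$ with palette $\{1,\dots,c\}$ exists and one tries to extend it to $e$. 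Recall that, in the terminology fixed in the paper, the edges conflicting with $e = uv$ are exactly those within distance two of $e$; in a subcubic graph an edge lies within distance two of at most $12$ others, so the bare greedy bound is $13$ and all the work lies in exploiting sparsity.

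The first half is the reducibility analysis. I would first show $\delta(G) \geq 2$, then classify the local environments of the $2$-vertices and of the $3_t$-vertices. The engine is always the same: delete a well-chosen edge or $2$-vertex, extend the coloring guaranteed by minimality, and argue that the deleted edge $e$ has strictly fewer than $c$ forbidden colors, where a color is forbidden for $e$ precisely when it is used by an already-colored edge within distance two of $e$. For instance, an edge incident with a $2$-vertex whose other neighbor is also of low degree sees very few colored edges and extends at once; the content is in the borderline configurations, where the count of edges within distance two equals or only slightly exceeds $c-1$. For those I expect to need a recoloring step — either a Kempe-type exchange on two color classes or, more systematically, a Hall / systems-of-distinct-representatives argument that simultaneously recolors a small cluster of edges around the deleted object so as to release a color for $e$. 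The output of this half is a finite list of configurations, tightening as $c$ decreases, none of which can occur in $G$.

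The second half is discharging. I would assign each vertex $v$ the initial charge $\mu(v) = \deg(v) - k$, so that $\sum_{v}\mu(v) = 2|E(G)| - k\,|V(G)| < 0$. Since $\Delta(G)\le 3$, only $3$-vertices carry positive charge and the $2$-vertices carry the deficit $2-k<0$ (there are no $1$-vertices). I would then set rules sending charge from $3$-vertices to the $2$-vertices in their neighborhood, the exact fractions depending on the part, and verify that after discharging every vertex has $\mu^{*}(v)\geq 0$: a $3$-vertex gives away at most its surplus, while each $2$-vertex collects enough from its neighbors precisely because the reducible configurations forbid the charge-poor patterns (a $2$-vertex adjacent to another $2$-vertex, a $3$-vertex adjacent to too many $2$-vertices, and so on). This yields $\sum_{v}\mu^{*}(v)\geq 0$, contradicting $\sum_{v}\mu^{*}(v) = \sum_{v}\mu(v) < 0$.

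I expect the main obstacle to be the interplay between the two halves at the tightest threshold, part (d) with $(k,c)=(\tfrac{20}{7},9)$: here the surviving configurations are the most numerous, several of them are borderline in the conflict count and hence need the delicate recoloring arguments rather than a one-shot greedy extension, and the discharging must be calibrated so that exactly these — and no others — are the patterns that would create a positive-charge vertex. Making the reducible list and the discharging rules dovetail, so that every unavoidable sparse configuration is reducible and every reducible configuration is excluded by the rules, is where the real effort goes; the individual conflict counts and charge balances are routine once the list is correct. Finally, since every reducibility step is phrased as ``fewer than $c$ forbidden colors,'' the same arguments give the list version $\chiup_{\mathrm{slist}}'(G)\le c$ verbatim, which is what the paper ultimately needs.
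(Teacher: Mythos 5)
This theorem is not proved in the paper at all: it is quoted as a known result of Hocquard, Montassier, Raspaud and Valicov \cite{MR3101726}, so there is no in-paper proof to compare against. Judged on its own terms, your proposal correctly identifies the method those authors actually use (a minimum counterexample, a list of reducible configurations established by delete--extend arguments counting edges within distance two, and a vertex discharging with initial charge $\deg(v) - k$), but it is a strategy outline rather than a proof. The entire mathematical content is deferred: you never exhibit a single reducible configuration beyond $\delta(G) \geq 2$, never state a discharging rule, and never verify a final charge. Your own phrasing concedes this --- ``I expect to need a recoloring step,'' ``the real effort goes'' into making the reducible list and the rules dovetail, and ``the individual conflict counts \ldots are routine once the list is correct.'' Producing that list, especially at the threshold $\mad(G) < \frac{20}{7}$ where the configurations are numerous and several extensions are not one-shot greedy, \emph{is} the theorem; without it nothing has been proved.

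Two smaller points. First, the naive bound you cite is off: in a subcubic graph an edge $uv$ has at most $2\cdot 2 + 4\cdot 2 = 12$ other edges within distance two only after a careful count; stating ``the bare greedy bound is $13$'' is harmless but should be justified rather than asserted, since all the borderline cases in parts (a)--(d) turn on counts of exactly this kind. Second, your closing claim that every step transfers verbatim to the list version is inconsistent with your proposal to use ``Kempe-type exchange on two color classes'' for the tight configurations: Kempe exchanges do not survive in the list setting because distinct edges carry distinct lists. The paper's remark that such proofs extend to $\chiup_{\mathrm{slist}}'$ applies only to arguments that are purely counts of forbidden versus available colors, so if any configuration genuinely requires a swap of two color classes, the list conclusion would need a separate argument.
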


In 1990, Faudree \etal \cite{MR1412876} studied the strong edge coloring of planar graphs. They gave an upper bound on the strong chromatic index in the following theorem.

\begin{theorem}[Faudree \etal \cite{MR1412876}]\label{4chiup}%
If $G$ is a planar graph, then $\chiup_{s}'(G) \leq 4\chiup'(G) \leq 4\Delta(G) + 4$.
\end{theorem}

The following result is a corollary of the proof of \autoref{4chiup}. 

\begin{theorem}\label{3chiup}%
If $G$ is a planar graph with girth at least $7$, then $\chiup_{s}'(G) \leq 3\chiup'(G) \leq 3\Delta(G) + 3$.
\end{theorem}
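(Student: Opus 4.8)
\section*{Proof proposal}

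The plan is to mimic the proof behind \autoref{4chiup}, replacing the appeal to the Four Color Theorem by Gr\"otzsch's theorem. First I would fix, by Vizing's theorem, a proper edge coloring of $G$ with $\chiup'(G) \leq \Delta(G) + 1$ colors; this partitions $E(G)$ into matchings $M_{1}, \dots, M_{k}$ with $k = \chiup'(G)$. The idea is to recolor each $M_{i}$ independently using a palette of three colors private to $M_{i}$, so that altogether $3\chiup'(G)$ colors are used. Since two edges lying in different matchings automatically receive different colors, the resulting coloring is strong as soon as, within each single matching $M_{i}$, any two edges that are within distance two in $G$ receive different colors (edges of a matching are pairwise non-adjacent, so conflicts can only arise at distance two).

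To govern the recoloring of one matching $M$, I would introduce its \emph{conflict graph} $H_{M}$: the vertex set is $M$, and two edges of $M$ are adjacent in $H_{M}$ exactly when they are within distance two in $G$. A proper $3$-coloring of $H_{M}$ is precisely a valid assignment of three colors to the edges of $M$. Two structural facts about $H_{M}$ are needed. The first is that $H_{M}$ is planar: contracting every edge of $M$ in $G$ and then deleting all vertices not arising from $M$ yields exactly $H_{M}$ (as a simple graph), so $H_{M}$ is a minor of $G$, and minors of planar graphs are planar. The second, and decisive, fact is that $H_{M}$ is triangle-free.

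The crux is this triangle-free claim, and it is where the hypothesis $\mathrm{girth}(G) \geq 7$ enters. Suppose for contradiction that $e_{1}, e_{2}, e_{3} \in M$ were pairwise within distance two. For each pair $e_{i}, e_{j}$ pick a connecting edge $f_{ij}$ joining an endpoint of $e_{i}$ to an endpoint of $e_{j}$. Because $M$ is a matching, the three edges $e_{1}, e_{2}, e_{3}$ are vertex-disjoint, and a short check shows that the three $f_{ij}$ are distinct from one another and from the $e_{i}$. Thus the six edges $e_{1}, e_{2}, e_{3}, f_{12}, f_{13}, f_{23}$ span at most six vertices while carrying six edges, so this subgraph has at least as many edges as vertices and therefore contains a cycle, necessarily of length at most six. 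This contradicts $\mathrm{girth}(G) \geq 7$. Hence $H_{M}$ is triangle-free, and by Gr\"otzsch's theorem $\chi(H_{M}) \leq 3$.

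Putting the pieces together, each matching $M_{i}$ can be recolored with three private colors so that within-distance-two edges differ, giving a strong edge coloring with $3\chiup'(G) \leq 3(\Delta(G)+1) = 3\Delta(G) + 3$ colors. I expect the only delicate point to be the triangle-free argument: one must confirm that the three connecting edges are genuinely distinct and that the short cycle they produce really lives in $G$. Any coincidence of endpoints only shortens the cycle, so it still violates the girth bound; making this bookkeeping precise is the sole place where care is required, the remainder being a direct transcription of the reasoning behind \autoref{4chiup}.
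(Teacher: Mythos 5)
Your proposal is correct and is essentially the argument the paper has in mind: it explicitly describes Theorem~\ref{3chiup} as obtained ``by employing analogies'' with Theorem~\ref{4chiup}, replacing the Four Color Theorem by Gr\"otzsch's theorem, which is exactly your decomposition into $\chiup'(G)$ matchings, the planar triangle-free conflict graph, and a private $3$-coloring of each. The girth computation (six edges on six vertices forcing a cycle of length at most six) is the right justification for triangle-freeness, so no gap remains.
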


\autoref{4chiup} was proved by using the "very powerful tool" --- Four Color Theorem. \autoref{3chiup} was proved by using Gr{\"o}tzsch's theorem \cite{MR0116320}, which says that the chromatic number of a triangle-free planar graph is at most $3$. If someone wants to improve the upper bounds to $4\Delta(G)$ and $3\Delta(G)$ in \autoref{4chiup} and \autoref{3chiup} respectively by
adding some additional conditions, then all are attributed to the $\Delta(G)$-edge-coloring problem on planar graphs, and thus we do not
address them here. 

Hud\'{a}k \etal \cite{MR3176691} also proved the following two results.
\begin{theorem}[Hud\'{a}k \etal \cite{MR3176691}]\label{3Delta+5}%
If $G$ is a planar graph with girth at least $6$, then $\chiup_{s}'(G) \leq 3\Delta(G) + 5$.
\end{theorem}

\begin{theorem}[Hud\'{a}k \etal \cite{MR3176691}]\label{3Delta}%
If $G$ is a planar graph with girth at least $7$, then $\chiup_{s}'(G) \leq 3\Delta(G)$.
\end{theorem}

Bensmail \etal \cite{MR3281197} and the authors (in the first version of the current paper, see \url{http://arxiv.org/abs/1402.5677v1}) independently improved the upper bound in \autoref{3Delta+5} to $3\Delta(G) + 1$. 
\begin{theorem}[Bensmail \etal \cite{MR3281197}]
If $G$ is a planar graph with girth at least $6$, then $\chiup_{s}'(G) \leq 3\Delta(G) + 1$.
\end{theorem}

Similar to all the other kinds of coloring parameters, we can define the list strong edge coloring and list strong chromatic index $\chiup_{\mathrm{slist}}'$. In fact, most of the results for strong edge coloring are also true for list strong edge coloring, since the proofs just indicate the numbers of forbidden colors and available colors for uncolored edges in each step. 

Ma \etal \cite{MR3049780} investigated the list strong edge coloring of subcubic graphs, and gave the following results. 
\begin{theorem}[Ma \etal \cite{MR3049780}]
  If $G$ is a subcubic graph, then
  \begin{enumerate}[label = (\alph*)]
    \item $\chiup_{\mathrm{slist}}'(G) \leq 6$ when $\mad(G) < \frac{15}{7}$;
    \item $\chiup_{\mathrm{slist}}'(G) \leq 7$ when $\mad(G) < \frac{27}{11}$;
    \item $\chiup_{\mathrm{slist}}'(G) \leq 8$ when $\mad(G) < \frac{13}{5}$;
    \item $\chiup_{\mathrm{slist}}'(G) \leq 9$ when $\mad(G) < \frac{36}{13}$. \qed
  \end{enumerate}
\end{theorem}

Zhu \etal \cite{MR3233401} improved the above result to the followings. 

\begin{theorem}[Zhu \etal \cite{MR3233401}]
  If $G$ is a subcubic graph, then
  \begin{enumerate}[label = (\alph*)]
    \item $\chiup_{\mathrm{slist}}'(G) \leq 7$ when $\mad(G) < \frac{5}{2}$;
    \item $\chiup_{\mathrm{slist}}'(G) \leq 8$ when $\mad(G) < \frac{8}{3}$;
    \item $\chiup_{\mathrm{slist}}'(G) \leq 9$ when $\mad(G) < \frac{14}{5}$.\qed
  \end{enumerate}
\end{theorem}

In section~\ref{MadSec}, we prove that if $G$ is a graph with $\mad(G) < 3$ and $\Delta(G) \leq 4$, then $\chiup_{\mathrm{slist}}'(G) \leq 3\Delta(G) + 1$. In section~\ref{Girth7Sec}, we prove that if $G$ is a planar graph with maximum degree at least $4$ and girth at least $7$, then $\chiup_{\mathrm{slist}}'(G) \leq 3\Delta(G)$. 

\section{Graphs with maximum average degree less than $3$ and maximum degree at most $4$}\label{MadSec}
In this section, we prove the following result with restriction on maximum average degree and maximum degree. 
\begin{theorem}\label{mad<3}%
If $G$ is a graph with $\mad(G) < 3$ and $\Delta(G) \leq 4$, then $\chiup_{\mathrm{slist}}'(G) \leq 3\Delta(G) + 1$.
\end{theorem}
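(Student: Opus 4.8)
The plan is to argue by contradiction via the standard discharging method. Suppose $G$ is a counterexample with the fewest edges, equipped with lists $L(e)$ of size $3\Delta(G)+1$ that admit no list strong edge coloring; by minimality every proper subgraph of $G$ is colorable from the same lists. Since the bound and all the estimates below are tightest when $\Delta(G)=4$ (where $3\Delta(G)+1=13$), I would treat that as the essential case, the subcubic regime $\Delta(G)\le 3$ being lighter and close to the $\mad$ results already quoted. Throughout, the basic move is: delete a small subgraph, invoke minimality to color what remains, and then extend, checking at each step that the edge being colored has fewer than $13$ forbidden colors --- namely the already-colored edges lying within distance two of it.

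First I would record the cheap structural facts. A pendant edge $uv$ (with $\deg(v)=1$) sees at most $\Delta(\Delta-1)\le 12<13$ edges within distance two, so it can always be colored last; hence $\delta(G)\ge 2$. At the other extreme, a generic interior edge with two degree-$4$ endpoints can have as many as $2\Delta(\Delta-1)=24$ edges within distance two, far above $13$, so no edge may be colored blindly --- the whole difficulty is to exploit $\mad(G)<3$, which forces an abundance of $2$-vertices, so as always to expose an edge with few colored neighbors. The favorable case is a $4$-vertex $x$ all of whose neighbors are $2$-vertices: deleting $x$ leaves those neighbors pendant (hence already colored), and each of the four ``spoke'' edges $xv_{i}$ then sees at most $3+3+1+3=10<13$ colored edges, so $x$ can be reinserted. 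This kind of argument, applied to a $4$-vertex together with a maximal bunch of incident $2$-vertices, is what lets me bound how densely $2$-vertices may cluster (control the number $t$ of $2$-neighbors of a $4_{t}$-vertex, and forbid adjacent $2$-vertices where possible).

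With a list of such forbidden configurations in hand, I would discharge. Assign to each vertex $v$ the charge $\mu(v)=\deg(v)$; applying $\mad(G)<3$ to $G$ itself gives $\sum_{v}\deg(v)=2|E(G)|<3|V(G)|$, so the total charge is strictly below $3|V(G)|$. Only $2$-vertices are in deficit (by $1$) and only $4$-vertices carry surplus (exactly $1$), while $3$-vertices are neutral. The rule would send charge from $4$-vertices to their $2$-neighbors --- roughly $\frac{1}{2}$ along each edge joining a $4$-vertex to a $2$-vertex, with a $2$-vertex having a $3$-neighbor drawing its full deficit from its $4$-neighbor. The reducibility lemmas are designed precisely so that every $2$-vertex ends with charge at least $3$ while no $4$-vertex gives away more than its surplus $1$; then every final charge is at least $3$ and $\sum_{v}\mu(v)\ge 3|V(G)|$, contradicting the inequality above.

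The hard part will be the reductions, not the discharging. The recurring obstruction is an ``outer'' edge $vw$ at a $2$-vertex $v$ whose far endpoint $w$ is a degree-$4$ vertex with a heavy neighborhood: when $w$ and all its neighbors have degree $4$, this edge sees up to $4+3+9=16$ already-colored edges, over the budget of $13$, so it resists a naive greedy extension. Overcoming this is where $\mad(G)<3$ must be used sharply: one has to show that a $2$-vertex cannot be surrounded entirely by degree-$4$ vertices that are themselves degree-$4$-saturated, and then choose a coloring order (coloring the spoke toward any light vertex first, and if necessary performing a local color exchange at $w$) so that each extended edge genuinely has at most $12$ forbidden colors. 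Making the family of reducible configurations rich enough to kill every bad local picture, while keeping it compatible with a discharging scheme that balances exactly, is the crux of the argument; I expect this to demand a fairly long case analysis organized around the $4_{t}$-vertices and the degrees of the second neighbors of the $2$-vertices.
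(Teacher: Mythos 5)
Your overall plan --- minimal counterexample, reducible configurations obtained by deleting a low-degree vertex and greedily extending, then discharging against $\sum_v(\deg(v)-3)<0$ --- is exactly the paper's strategy, and the two reductions you actually carry out (pendant edges, and a $4$-vertex with four $2$-neighbors) are computed correctly. But the proposal stops precisely where the proof begins. The one concrete discharging rule you float, ``roughly $\frac12$ along each edge joining a $4$-vertex to a $2$-vertex,'' does not balance: a $4_3$-vertex would send $\frac32$ while holding surplus only $1$, and a $4_3$-vertex adjacent to three $2$-vertices is \emph{not} reducible by the greedy deletion argument, so you cannot simply forbid it. The paper's escape is a genuinely new lemma (its Claim~5): if a $4_3$-vertex $v$ is adjacent to three $2$-vertices, then each such $2$-vertex has a $4_1$-vertex as its other neighbor, so the $4_3$-vertex sends nothing and the $4_1$-vertex pays the full deficit. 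Proving that lemma requires the one technique absent from your sketch: after coloring $G-v_1$ by minimality, one \emph{erases} the color of a second edge $vv_2$ to drop the count of forbidden colors on $v_1w_1$ below $3\Delta+1$, and only then recolors $vv_1$ and $vv_2$. Your ``local color exchange at $w$'' gestures in this direction but is never specified, and without it the case you yourself identify as the recurring obstruction (a $2$-vertex whose far endpoint sees $16>13$ colored edges) is not resolved.

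Two smaller gaps. First, the remaining reducible configurations needed to make every $2$-vertex collect charge $1$ are not stated: you need that every $2$-vertex has a $4$-neighbor (so a $2$-vertex between two $3^-$-vertices is reducible --- each of its edges sees at most $2\Delta+2=10<13$ colored edges), and that if a vertex has two or more $2$-neighbors then the second neighbor of each such $2$-vertex is a $4$-vertex; these, with the exclusion of $4_4$-vertices, are exactly what force every remaining $2$-vertex to sit between two $4_2$-vertices and receive $\frac12+\frac12$. Second, deferring $\Delta(G)\le 3$ to ``results already quoted'' is not safe: the quoted $\mad$ theorems require $\mad<\frac{20}{7}<3$ and are not list results. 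The paper instead shows $\Delta=4$ is forced in a minimal counterexample (a $2^-$-vertex exists by $\mad(G)<3$, and it must be a $2$-vertex adjacent to a $4$-vertex), which disposes of the subcubic case for free; your argument should do the same rather than appeal to external theorems.
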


Note that Andersen \cite{MR1189847} and Hor{\' a}k \etal \cite{MR1217390} independently proved that the strong chromatic index of a subcubic graph is at most $10$. However, none of their proofs is true for the list strong chromatic index.  

\begin{corollary}\label{Delta4}%
If $G$ is a planar graph with girth at least $6$ and $\Delta(G) \leq 4$, then $\chiup_{\mathrm{slist}}'(G) \leq 3\Delta(G) + 1$.
\end{corollary}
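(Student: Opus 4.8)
The plan is to deduce \autoref{Delta4} directly from \autoref{mad<3}: since the latter already bounds the list strong chromatic index for every graph with $\Delta(G) \le 4$ and $\mad(G) < 3$, all that remains is to verify that the hypotheses of the corollary force $\mad(G) < 3$. Thus the entire content of the proof is a sparsity estimate for planar graphs of girth at least $6$, and no additional coloring argument is needed.

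First I would reduce the maximum-average-degree bound to a statement about a single connected subgraph. Since $\mad(G)$ is the maximum of $\frac{2|E(H)|}{|V(H)|}$ over all subgraphs $H \subseteq G$, and since the inequality $2|E(H_i)| < 3|V(H_i)|$ on each component $H_i$ of $H$ sums to the same inequality for $H$ itself, it suffices to prove $2m < 3n$ for every connected subgraph $H$ with $n$ vertices and $m$ edges. Every such $H$ is planar and has girth at least $6$, because deleting vertices and edges never creates shorter cycles.

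Second, I would split into two cases. If $H$ is a tree (or a single vertex), then $m \le n - 1$, so $\frac{2m}{n} \le \frac{2(n-1)}{n} < 2 < 3$ immediately. If $H$ contains a cycle, I would invoke Euler's formula $n - m + f = 2$ together with the girth condition: each face is bounded by at least $6$ edges and each edge lies on at most two faces, so $6f \le 2m$, i.e. $f \le \frac{m}{3}$. Substituting gives $2 = n - m + f \le n - \frac{2}{3}m$, hence $\frac{2}{3}m \le n - 2 < n$ and therefore $\frac{2m}{n} < 3$. Combining the two cases with the reduction above yields $\mad(G) < 3$, and then \autoref{mad<3} gives $\chiup_{\mathrm{slist}}'(G) \le 3\Delta(G) + 1$.

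There is essentially no obstacle in this argument; the only mild care needed is the passage from connected subgraphs to arbitrary subgraphs, and the separate treatment of acyclic pieces, for which the face-counting inequality $6f \le 2m$ would otherwise be vacuous. All of the genuine difficulty lives in \autoref{mad<3}, whose proof presumably proceeds by a discharging analysis of a minimal counterexample; the corollary merely packages the standard fact that girth at least $6$ is precisely the planar threshold at which $\mad$ drops below $3$, since a planar graph of girth at least $g$ satisfies $\mad(G) < \frac{2g}{g-2}$ and $\frac{2 \cdot 6}{6-2} = 3$.
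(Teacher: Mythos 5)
Your proof is correct and is exactly the intended derivation: the paper states \autoref{Delta4} as an immediate consequence of \autoref{mad<3} via the standard fact that a planar graph of girth at least $6$ has $\mad(G) < \frac{2\cdot 6}{6-2} = 3$, which is precisely the Euler-formula computation you carry out. Your extra care with disconnected and acyclic subgraphs is sound but routine; nothing further is needed.
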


Here, we do not prove \autoref{mad<3} directly, but alternately prove the corresponding result for original strong chromatic index, because the following proof can be trivially extended to the list strong chromatic index and it is easy for writing.    
\begin{theorem}\label{mad<3*}%
If $G$ is a graph with $\mad(G) < 3$ and $\Delta(G) \leq 4$, then $\chiup_{s}'(G) \leq 3\Delta(G) + 1$.
\end{theorem}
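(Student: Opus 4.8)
The plan is to argue by contradiction through a minimal counterexample combined with the discharging method, phrasing every extension step as a comparison between a count of forbidden colors and the number $k=3\Delta(G)+1$ of available colors (so that the identical argument will yield the list statement in \autoref{mad<3}). First I would dispose of small maximum degree: if $\Delta(G)\le 2$ the bound follows from \autoref{Prop}, and if $\Delta(G)=3$ it follows from the confirmed subcubic bound $\chiup_{s}'(G)\le 10=3\Delta(G)+1$. Hence I may assume $\Delta(G)=4$ and $k=13$. Let $G$ be a counterexample minimizing $|V(G)|+|E(G)|$. Every proper subgraph $H$ inherits $\mad(H)<3$ and $\Delta(H)\le 4$, so by minimality $\chiup_{s}'(H)\le 3\Delta(H)+1\le k$; that is, every proper subgraph of $G$ admits a strong edge coloring using the $k$ colors. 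The elementary estimate I will use throughout is that an edge $xy$ is within distance two of at most $(\deg(x)-1)+(\deg(y)-1)+\sum_{w\in N_G(x)\setminus\{y\}}(\deg(w)-1)+\sum_{z\in N_G(y)\setminus\{x\}}(\deg(z)-1)$ other edges, applied to the few edges that remain uncolored after minimality is invoked.

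The first block of work is a list of reducible configurations, each handled by deleting a small set of edges incident with certain $2$-vertices, coloring the rest by minimality, and extending greedily in a chosen order; note these reductions use only $\Delta(G)\le 4$, not $\mad(G)<3$. I expect to need: (i) $\delta(G)\ge 2$, since an edge at a $1$-vertex sees at most $(\Delta-1)\Delta=12<13$ colored edges; (ii) no two adjacent $2$-vertices; (iii) no $2$-vertex adjacent to two $3$-vertices; and (iv) a bound preventing a $4$-vertex from being incident with ``too many'' $2$-vertices. To state (iv) I call a $2$-vertex \emph{good} if both neighbors have degree $4$ and \emph{needy} if exactly one neighbor has degree $3$; after (ii)--(iii) these are the only possibilities, and every $2$-vertex has a $4$-neighbor. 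The configurations to reduce are a $4$-vertex incident with two needy $2$-vertices, with one needy and one good $2$-vertex, or with three good $2$-vertices. In each case I delete the (at most four) edges from the $4$-vertex to its $2$-neighbors; after recoloring the rest, each deleted edge sees at most twelve colored edges within distance two, so coloring the deleted edges one at a time always leaves a free color.

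With no such configuration present I turn to discharging with initial charge $\mu(v)=\deg(v)-3$, so $\sum_v\mu(v)=2|E(G)|-3|V(G)|<0$ because $\mad(G)<3$; since $\delta(G)\ge 2$ and $\Delta(G)=4$ this sum equals $n_4-n_2$, where $n_i$ counts the $i$-vertices. The single rule sends $\tfrac12$ from each $4$-vertex to every incident good $2$-vertex and $1$ from each $4$-vertex to every incident needy $2$-vertex. A good $2$-vertex then ends at $-1+\tfrac12+\tfrac12=0$, a needy $2$-vertex at $-1+1=0$, and a $3$-vertex keeps charge $0$; by (iv) every $4$-vertex gives away at most $1$ and ends with charge at least $0$. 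Hence all final charges are nonnegative, forcing $\sum_v\mu(v)\ge 0$ and contradicting $\sum_v\mu(v)<0$; so no counterexample exists.

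The step I expect to be the main obstacle is the reduction in (iv): the relevant edges can see up to twelve colored edges out of thirteen colors, so the extension is genuinely tight and succeeds only with the right deletion set and the right order, namely coloring the edges to good $2$-vertices first and the edges to needy $2$-vertices last (a needy edge's far endpoint is a $3$-vertex contributing fewer forbidden colors, so it is safe to color once all the $4$-vertex's edges are fixed). Verifying that each of the finitely many degree patterns of the second neighbors stays strictly below thirteen is the bulk of the casework. Finally, because every extension was justified purely by comparing forbidden colors with available colors, the same argument applies verbatim when each edge receives its own list of $3\Delta(G)+1$ colors, which proves \autoref{mad<3}.
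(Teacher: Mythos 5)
Your overall architecture is the same as the paper's: minimal counterexample, structural claims about $2$-vertices, then discharging from $\mu(v)=\deg(v)-3$ with a $4$-vertex sending $1$ to each ``needy'' and $\frac{1}{2}$ to each ``good'' $2$-neighbour. The gap is in the reducibility of your configurations (a) and (b), exactly the step you flag as the main obstacle: the counts do \emph{not} stay strictly below thirteen. Take a $4$-vertex $v$ with exactly two $2$-neighbours $v_{1},v_{2}$ and two $4$-neighbours $v_{3},v_{4}$, where $N_{G}(v_{i})=\{v,w_{i}\}$. If both $v_{1},v_{2}$ are needy ($\deg(w_{i})=3$), delete $vv_{1},vv_{2}$ and colour $vv_{1}$ first (it sees $12$ coloured edges, fine); but $vv_{2}$ then sees $3$ coloured edges at $v$, $1$ at $v_{1}$, $3$ at $v_{3}$, $3$ at $v_{4}$, $1$ at $v_{2}$ and $2$ at $w_{2}$ --- exactly $13$ forbidden colours out of $13$. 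If $v_{1}$ is needy and $v_{2}$ is good ($\deg(w_{2})=4$), the edge $vv_{2}$ already sees $13$ coloured edges when it is coloured \emph{first} (the contribution at $w_{2}$ rises to $3$) and $14$ when coloured second, so no ordering of $\{vv_{1},vv_{2}\}$ succeeds. Deleting only the edges from the $4$-vertex to its $2$-neighbours is therefore not enough in these cases.

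The repair --- and what the paper actually does in \autoref{Two4} and \autoref{4-3} --- is to delete the $2$-vertex $v_{1}$ itself, so that the pendant edge $v_{1}w_{1}$ is also uncoloured (and, in \autoref{4-3}, additionally to erase the colour of a second edge $vv_{2}$ at $v$); then $vv_{1}$ sees at most $2\Delta(G)+4=12$ coloured edges, and $v_{1}w_{1}$ is coloured last using $\deg(w_{1})\leq 3$. Note also that the paper does not forbid a $4_{3}$-vertex at all: it shows instead that the $2$-vertices around one are rescued by a $4_{1}$-vertex on the far side and lets $4_{3}$-vertices keep their surplus charge. Your configuration (c) (three good $2$-neighbours) \emph{is} reducible by your method --- the last deleted edge sees at most $12$ coloured edges --- so that piece is a valid alternative to the paper's \autoref{4-3}, but (a) and (b) need the stronger deletion. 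A secondary issue: for $\Delta(G)=3$ you invoke the Andersen--Hor\'{a}k bound $\chiup_{s}'(G)\leq 10$, which is legitimate for \autoref{mad<3*} but is not known in the list setting, so it would break your advertised ``verbatim'' extension to \autoref{mad<3}; the paper avoids this by deducing $\Delta(G)=4$ directly from \eqref{Mad<3} together with \autoref{delta} and \autoref{2-4}.
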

\begin{proof}%
Let $G$ be a minimum counterexample to the theorem. The minimum means that every proper subgraph  $H$ of $G$ admits a strong edge coloring with at most $3\Delta(H) + 1$ colors. By the minimality, the graph $G$ is connected and $\delta(G) \geq 1$. By \autoref{Prop}, the maximum degree is at least three.

\begin{claim}\label{delta}
The minimum degree of $G$ is at least two.
\end{claim}
\begin{proof}
Suppose that a $1$-vertex $v$ is adjacent to a vertex $u$. The graph $G - v$ has a strong edge coloring with at most $3\Delta(G) + 1$ colors. Note that the edge $uv$ has at most $3 \Delta(G)$ colored edges within distance two. Thus, we can assign an available color to $uv$, which is a contradiction.
\end{proof}

\begin{claim}\label{2-4}%
Every $2$-vertex is adjacent to a $4$-vertex.
\end{claim}
\begin{proof}
Suppose that a $2$-vertex $v$ is adjacent to two $3^{-}$-vertices $u$ and $w$. By the minimality of $G$, the graph $G - v$ has a strong edge coloring with at most $3\Delta(G) + 1$ colors. Both $uv$ and $wv$ have at most $3\Delta(G) - 1$ colored edges within distance two, thus both have at least two available colors, a contradiction.
\end{proof}

Since $\mad(G) < 3$, we have the following inequality:
\begin{equation*}
\sum_{v\, \in V(G)}(\deg(v) - 3) < 0, 
\end{equation*}
which implies $\Delta(G) = 4$ by \autoref{delta} and \autoref{2-4}.

\begin{claim}\label{Two4}%
Let $v_{1}$ be a $2$-vertex with $N_{G}(v_{1}) = \{v, w_{1}\}$. If $v$ is adjacent to at least two $2$-vertices, then $w_{1}$ is a $4$-vertex.
\end{claim}
\begin{proof}
Suppose to the contrary that $w_{1}$ is a $3^{-}$-vertex and $v$ is adjacent to another $2$-vertex $v_{2}$. By the minimality of $G$, the graph $G - v_{1}$ admits a strong edge coloring with at most $3\Delta(G) + 1$ colors. The edge $vv_{1}$ has at most $2\Delta(G) + 4$ colored edges within distance two, and the edge $v_{1}w_{1}$ has at most $2\Delta(G) + 3$ colored edges within distance two. Hence, the edge $vv_{1}$ has at least one available color and $v_{1}w_{1}$ has at least two available colors. Thus, we can color $vv_{1}$ and $v_{1}w_{1}$ in this order, which is a contradiction.
\end{proof}

\begin{claim}\label{No4-4}%
There is no $4_{4}$-vertex in $G$.
\end{claim}
\begin{proof}
Suppose that $v$ is a $4_{4}$-vertex with $N_{G}(v) = \{v_{1}, v_{2}, v_{3}, v_{4}\}$. By the minimality of $G$, the graph $G - v$ admits a strong edge coloring with at most $3\Delta(G) + 1$ colors. Note that every edge $vv_{i}$ has at most $\Delta(G) + 3$ colored edges within distance two, thus each edge has at least $2\Delta(G) - 2 = 6$ available colors, a contradiction.
\end{proof}

\begin{claim}\label{4-3}%
If a $2$-vertex is adjacent to a $4_{3}$-vertex, then the other neighbor is a $4_{1}$-vertex.  
\end{claim}
\begin{proof}%
Let $v$ be a $4_{3}$-vertex with three $2$-neighbors $v_{1}, v_{2}$ and $v_{3}$. Let $w_{1}$ be the other neighbor of $v_{1}$. By contradiction and \autoref{Two4} and \autoref{No4-4}, the vertex $w_{1}$ is a $4_{2}$- or $4_{3}$-vertex. By the minimality of $G$, the graph $G - v_{1}$ admits a strong edge coloring with at most $3\Delta(G) + 1$ colors. Now, we remove the color on edge $vv_{2}$ and denote the resulting coloring by $\sigma$. The edge $v_{1}w_{1}$ has at most $2\Delta(G) + 4$ colored edges within distance two, thus it has at least one available color, so we can assign a color to $v_{1}w_{1}$. After $v_{1}w_{1}$ was colored, both $vv_{1}$ and $vv_{2}$ have at least two available colors, so we can assign colors to them, which is a contradiction.
\end{proof}

Next, we use the discharging method to get a contradiction and complete the proof. We assign an initial charge $\deg(v) - 3$ to every vertex $v$, and then design appropriate discharging rules and redistribute charges among vertices, such that the final charge of every vertex is nonnegative, which derives a contradiction.

\paragraph{The Discharging Rules:}
\begin{enumerate}[label= (R\arabic*)]%
\item Every $4_{1}$-vertex sends $1$ to the adjacent $2$-vertex.
\item Every $4_{2}$-vertex sends $\frac{1}{2}$ to every adjacent $2$-vertex.
\end{enumerate}

\begin{itemize}%
\item {\bf Let $v$ be a $4$-vertex. }
If $v$ is a $4_{0}$-vertex, then the final charge is $4 - 3 = 1$. If $v$ is a $4_{1}$-vertex, then the final charge is $4 - 3 - 1 = 0$. If $v$ is a $4_{2}$-vertex, then the final charge is $4 - 3 - 2 \times \frac{1}{2} = 0$. If $v$ is a $4_{3}$-vertex, then the final charge is $4 - 3 = 1$. By \autoref{No4-4}, there is no $4_{4}$-vertex.

\item The final charge of every $3$-vertex is zero.
\item {\bf Let $v$ be a $2$-vertex. }
If $v$ is adjacent to a $4_{1}$-vertex, then the final charge is at least $2 - 3 + 1 = 0$.

By \autoref{4-3}, if $v$ is adjacent to a $4_{3}$-vertex, then it is adjacent to a $4_{1}$-vertex, but this case has been treated above. So we may assume that $v$ is not adjacent to any $4_{1}$- or $4_{3}$-vertex. By \autoref{2-4}, the $2$-vertex $v$ is adjacent to a $4$-vertex. By \autoref{Two4} and the excluded cases, the vertex $v$ is adjacent to two $4_{2}$-vertices, and then the final charge is $2 - 3 + 2 \times \frac{1}{2} = 0$. \resetcounter\qedhere
\end{itemize}
\end{proof}

\section{Planar graphs with girth at least $7$}\label{Girth7Sec}
The following is the main result in this section. 
\begin{theorem}\label{EP}%
Let $G$ be a planar graph with maximum degree {\em at most} $\Delta$, where $\Delta \geq 4$. If $G$ has girth at least $7$, then $\chiup_{\mathrm{slist}}'(G) \leq 3\Delta$.
\end{theorem}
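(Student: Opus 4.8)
The plan is to argue by contradiction via the discharging method, exactly in the spirit of the proof of \autoref{mad<3*}. Let $G$ be a counterexample minimizing $|V(G)|+|E(G)|$, so that every proper subgraph admits a list strong edge coloring from any lists of size $3\Delta$. As in \autoref{mad<3*}, I would carry out the argument for ordinary strong edge coloring and observe that every reduction below only ever counts forbidden and available colors at the edges being (re)colored, so the whole argument transfers verbatim to the list setting. A $1$-vertex is deleted and recolored as in \autoref{delta}, giving $\delta(G)\ge 2$; the subcubic case $\Delta(G)\le 3$ is handled by known results (the budget $3\Delta\ge 12$ comfortably exceeds what is needed there), so I may assume $4\le\Delta(G)\le\Delta$.

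The technical heart of the reductions is a single forbidden-color count that the girth hypothesis makes clean. If $v$ is a $2$-vertex with neighbors $u$ and $w$, then in any strong edge coloring of $G-v$ the colors forbidden at $uv$ come exactly from the edges (other than $uv$) incident with $N_G(u)\cup N_G(v)$; because $G$ has girth at least $7$ no short cycle can make two of these vertices adjacent or share a neighbor, so these edges are pairwise distinct and the count equals at most $\deg(w)+\sum_{x\in N_G(u)\setminus\{v\}}\deg(x)$, and symmetrically at $vw$. I would use this to establish a short list of reducible configurations by deleting $v$ (or a suitable small set), coloring the remainder by minimality, and extending. For example, if $u$ and $w$ both have degree at most $3$ then both counts are at most $2\Delta+3\le 3\Delta-1$, so $uv$ and $vw$ can be colored in turn; hence every $2$-vertex has a neighbor of degree at least $4$ (here $\Delta\ge 4$ is used). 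The same scheme, now erasing the color of one already-colored edge before recoloring as in \autoref{4-3}, disposes of the distance-two obstructions and bounds how many $2$-vertices may surround a vertex of a given degree; in particular a $d$-vertex all of whose neighbors are $2$-vertices is reducible for every $d\le\Delta$, since the last of its incident edges then has at most $2d+\Delta-2\le 3\Delta-2$ forbidden colors.

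For the discharging I would first record that a planar graph of girth at least $7$ satisfies $\mad(G)<\tfrac{14}{5}$ — combine Euler's formula with $2|E(G)|=\sum_{f}\deg(f)\ge 7|F(G)|$ — so that $\sum_{v}(5\deg(v)-14)<0$. Assigning each vertex the initial charge $\mu(v)=5\deg(v)-14$ makes every $3^{+}$-vertex nonnegative and leaves only the $2$-vertices deficient, each by $4$. The rules then send charge from the $4^{+}$-vertices (and, to a tightly controlled extent, the $3$-vertices) to their adjacent $2$-vertices, the amount handed to a $2$-vertex depending on whether its other neighbor has degree at most $3$ or at least $4$, so that every $2$-vertex collects exactly $4$. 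Invoking the reducible configurations to bound, for each degree $d$, the number and type of $2$-neighbors a $d$-vertex may carry, I would verify that every $4^{+}$-vertex retains nonnegative charge; since a $3$-vertex owns a surplus of only $1$, the configurations must in particular forbid it from being forced to subsidize more than that. With all final charges nonnegative we contradict $\sum_{v}\mu(v)<0$, completing the proof.

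I expect the main obstacle to be precisely the bookkeeping in the last two steps: choosing the discharging amounts together with a matching list of reducible configurations so that the two fit. The tight cases are $2$-vertices whose second neighbor is a $3$-vertex — where a single $4^{+}$-vertex must supply almost the entire deficit of $4$ — and high-degree vertices carrying the maximum number of $2$-neighbors permitted by $\mad(G)<\tfrac{14}{5}$; both force the most delicate erase-and-recolor reductions, and checking that these reductions remain valid for all $\Delta\ge 4$ at once, rather than for a single small $\Delta$ as in \autoref{mad<3*}, is where the real work lies.
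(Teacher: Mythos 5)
The fatal step is the very first one: you assert that a $1$-vertex can be deleted and its pendant edge recolored ``as in \autoref{delta}'', yielding $\delta(G)\ge 2$. That reduction works with $3\Delta+1$ colors, but not with only $3\Delta$. If $v$ is a $1$-vertex with neighbor $u$ of degree $k$, the edge $uv$ can have as many as $(k-1)+(k-1)(\Delta-1)=(k-1)\Delta$ colored edges within distance two, and girth $7$ guarantees these edges are genuinely distinct; for $k\ge 4$ this is at least $3\Delta$, so no color need remain for $uv$. This is precisely why the paper's proof of \autoref{EP*} never establishes $\delta(G)\ge 2$: its \autoref{delta1} only pins down the local structure around a surviving $1$-vertex, and $1$-vertices persist into the discharging phase, where their deficit is paid jointly by the adjacent vertex \emph{and} the incident face (rules \ref{1face} and \ref{1vertex}, made possible by charging faces with $\deg(f)-7$). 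Your discharging is vertex-only, built on $\mad(G)<\tfrac{14}{5}$; in your normalization a surviving $1$-vertex has deficit $9$ while its unique $4^{+}$-neighbor has surplus at most $5\Delta-14$, which for $\Delta=4$ is $6<9$. So the scheme cannot close without face charges, and the rest of your plan inherits the false premise $\delta(G)\ge 2$.

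Beyond this, the proposal defers the substantive content: the reducible configurations and the transfer amounts are described only in outline, yet the tight cases you yourself flag (a $2$-vertex whose second neighbor is a $3$-vertex, and $5^{+}$-vertices crowded with $2$-neighbors) are exactly where the paper needs \autoref{412}, \autoref{No433} and \autoref{NoTwo42} --- for instance, that a $2$-vertex adjacent to a $3_{2}$-vertex must have a $5^{+}$- or $4_{1}$-vertex as its other neighbor --- together with ten rules in which even $3_{1}$-vertices pay $\tfrac12$, plus a further erase-and-recolor argument carried out inside the final charge verification for $5^{+}$-vertices. So even granting the general strategy (minimal counterexample, distance-two color counts sharpened by girth $7$, discharging), what is written is a plan whose first reduction is false and whose remaining steps are not yet carried out.
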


Similar to the previous section, we do not prove the above result directly, but alternately prove the following corresponding result for original strong chromatic index. Note that the following result has been proved in \cite{MR3176691} by using Gr{\"o}tzsch's theorem. It is well known that Gr{\"o}tzsch's theorem is not true for the list vertex coloring, so the proof due to Hud{\' a}k \etal \cite{MR3176691} cannot be extended to the proof for \autoref{EP}. 

\begin{theorem}\label{EP*}%
Let $G$ be a plane graph with maximum degree {\em at most} $\Delta$, where $\Delta \geq 4$. If $G$ has girth at least $7$, then $\chiup_{s}'(G) \leq 3\Delta$.
\end{theorem}
\begin{proof}%
Let $G$ be a minimum counterexample to the theorem. The minimum means that every proper subgraph $H$ of $G$ has a strong edge coloring with at most $3\Delta$ colors. By the minimality, the graph $G$ is connected and $\delta(G) \geq 1$.

\begin{claim}\label{delta1}
Every $1$-vertex $v$ is adjacent to a $4^{+}$-vertex $u$; if $u$ is a $4$-vertex, then it is adjacent to three $\Delta$-vertices.
\end{claim}
\begin{proof}
Suppose that a $1$-vertex $v$ is adjacent to a vertex $u$. The graph $G - v$ admits a strong edge coloring with at most $3\Delta$ colors. If $uv$ has at most $3\Delta - 1$ colored edges within distance two, then we can extend the coloring to $G$, a contradiction. Hence, the edge $uv$ has at least $3\Delta$ colored edges within distance two, which implies the claim.
\end{proof}

\begin{claim}\label{2--4}%
Every $2$-vertex is adjacent to a $4^{+}$-vertex.
\end{claim}
\begin{proof}
Suppose that a $2$-vertex $v$ is adjacent to two $3^{-}$-vertices $u$ and $w$. By the minimality of $G$, the graph $G - v$ admits a strong edge coloring with at most $3\Delta$ colors. Both $uv$ and $wv$ have at most $2\Delta + 2 \leq 3\Delta - 2$ colored edges within distance two, thus each of them has at least two available colors, a contradiction.
\end{proof}

\begin{claim}\label{No33}%
Every vertex is adjacent to at least one $3^{+}$-vertex.
\end{claim}
\begin{proof}
Suppose that $v$ is a $\tau$-vertex with $N_{G}(v) = \{v_{1}, v_{2}, \dots, v_{\tau}\}$ and it is not adjacent to $3^{+}$-vertices. By the minimality of $G$, the graph $G - v$ admits a strong edge coloring with at most $3\Delta$ colors. Note that every edge $vv_{i}$ has at most $\Delta + \tau - 1 \leq 2\Delta - 1$ colored edges within distance two, thus each has at least $\Delta + 1$ available colors, a contradiction.
\end{proof}

\begin{claim}\label{412}%
If a $2$-vertex $v$ is adjacent to a $4$-vertex $u$ and a $2$-vertex $w$, then $u$ is a $4_{1}$-vertex.
\end{claim}
\begin{proof}
Suppose that $u$ is not a $4_{1}$-vertex. The graph $G - v$ admits a strong edge coloring with at most $3\Delta$ colors. The edge $uv$ has at most $(2\Delta + 2) + 1 = 2\Delta + 3$ colored edges within distance two, so we can assign a color to it. After the edge $uv$ was colored, the edge $wv$ has at most $\Delta + 4$ colored edges within distance two, so we also can assign an available color to it.
\end{proof}

\begin{claim}\label{No433}%
If a $2$-vertex $v$ is adjacent to a $4$-vertex $u$ and a $3$-vertex $w$, then $u$ is not a $4_{3}$-vertex.
\end{claim}
\begin{proof}
Suppose to the contrary that $u$ is a $4_{3}$-vertex. The graph $G - v$ admits a strong edge coloring with at most $3\Delta$ colors. The edge $wv$ has at most $2\Delta + 3$ colored edges within distance two, so we can assign a color to it. After $wv$ was colored, the edge $uv$ has at most $(\Delta + 4) + 3 = \Delta + 7 < 3\Delta$ colored edges within distance two, so we also can assign an available color to it.
\end{proof}

\begin{claim}\label{NoTwo42}%
Let $v_{1}$ be a $2$-vertex with $N_{G}(v_{1}) = \{v, w_{1}\}$. If $v$ is a $3_{2}$-vertex, then $w_{1}$ is a $5^{+}$-vertex or $4_{1}$-vertex.
\end{claim}
\begin{proof}
By contradiction and \autoref{No33}, suppose that $w_{1}$ is a $3^{-}$-vertex or $4_{2}$-vertex or $4_{3}$-vertex. By the minimality of $G$, the graph $G - v_{1}$ admits a strong edge coloring with at most $3\Delta$ colors. Let $v$ be adjacent to another $2$-vertex $v_{2}$. Now, we remove the color on edge $vv_{2}$ and denote the resulting coloring by $\sigma$. The edge $v_{1}w_{1}$ has at most $1 + (2\Delta + 2) = 2\Delta + 3 \leq 3\Delta - 1$ colored edges within distance two, thus we can assign an available color to $v_{1}w_{1}$. After the edge $v_{1}w_{1}$ was colored, the edge $vv_{1}$ has at most $(\Delta + 1) + 4 = \Delta + 5$ colored edges within distance two, and $vv_{2}$ has at most $(\Delta + 1) + \Delta = 2\Delta + 1$ colored edges within distance two. Thus, both $vv_{1}$ and $vv_{2}$ have at least three available colors, and we can extend $\sigma$ to a strong edge coloring of $G$, which is a contradiction.
\end{proof}

\begin{claim}\label{U3}
Suppose that $v$ is adjacent to exactly one $3^{+}$-vertex. If $u$ is a $2^{-}$-neighbor of $v$, then it is a $2$-vertex, and the other neighbor of $u$ is a $4^{+}$-vertex. 
\end{claim}
\begin{proof}
If $u$ is a $1$-vertex, then $uv$ has at most $\Delta + 2(\Delta - 2) = 3\Delta - 4$ edges (other than $uv$) within distance two, so we can first color $G - u$ and extend the coloring to $G$, a contradiction. So we may assume that $v$ has no $1$-neighbor. 

Suppose that $w$ is the other neighbor of $u$ and it is a $3^{-}$-vertex. The graph $G - u$ admits a strong edge coloring with at most $3\Delta$ colors. The edge $uw$ has at most $2\Delta + (\Delta - 1) = 3\Delta - 1$ colored edges within distance two, so we can assign an available color to $uw$. After the edge $uw$ was colored, the edge $uv$ has at most $\Delta + 2(\Delta - 2) + 3 = 3\Delta - 1$ colored edges within distance two, so we also can assign an available color to $uv$, and then we obtain a strong edge coloring of $G$, a contradiction. 
\end{proof}

\begin{claim}\label{U33}
Let $k \geq 5$ be an integer. Suppose that a $k$-vertex $v$ is adjacent to exactly two $3^{+}$-vertices. Then $v$ is adjacent to at most $k - 5$ pendent vertices. Moreover, if $v$ is adjacent to exactly $k - 5$ pendent vertices, then one of the adjacent $2$-vertex is adjacent to a $3_{1}$-vertex or a $4^{+}$-vertex (other than $v$). 
\end{claim}
\begin{proof}
Let $\ell$ be the number of adjacent $1$-vertices. Suppose to the contrary that $\ell \geq k - 4 \geq 1$, and $uv$ is a pendent edge incident with $v$. Note that $uv$ has at most $2\Delta + (\ell - 1) + 2(k - 2 - \ell) = 2\Delta + 2k - \ell - 5 \leq 3\Delta$ edges (other than $uv$) within distance two, a contradiction. 

Assume that $v$ is adjacent to exactly $k - 5$ pendent vertices and three $2$-vertices $v_{1}, v_{2}, v_{3}$. Let $N_{G}(v_{1}) = \{v, w_{1}\}$, $N_{G}(v_{2}) = \{v, w_{2}\}$ and $N_{G}(v_{3}) = \{v, w_{3}\}$. Suppose that all the vertices $w_{1}, w_{2}$ and $w_{3}$ are $2$- or $3_{2}$-vertices. By the minimality, $G - v_{1}$ has a strong edge coloring with at most $3\Delta$ colors. Now, we remove the colors on $v_{2}w_{2}$ and $v_{3}w_{3}$. The edge $vv_{1}$ has at most $2\Delta + (k-3) + 2 \leq 3\Delta - 1$ colored edges within distance two, so we can assign an available color to $vv_{1}$. After $vv_{1}$ was colored, we can assign available colors to $v_{1}w_{1}, v_{2}w_{2}, v_{3}w_{3}$ in this order.
\end{proof}

Euler's formula can be rewritten to the following equality:
\begin{equation}%
\sum_{v\, \in V(G)}\left(\frac{5}{2}\deg(v) - 7\right) + \sum_{f\, \in\, F(G)}(\deg(f) - 7) = - 14.
\end{equation}

Similar to the previous section, we use the discharging method to get a contradiction. We first assign an initial charge $\frac{5}{2}\deg(v) - 7$ to every vertex $v$ and $\deg(f) - 7$ to every face $f$. We design appropriate discharging rules and redistribute charges among vertices and faces, such that the final charge of every vertex and every face is nonnegative, which leads to a contradiction.

\paragraph{The Discharging Rules:}
\begin{enumerate}[label= (R\arabic*)]%
\item\label{1face} Every $1$-vertex receives $2$ from its incident face.
\item\label{1vertex} Every $1$-vertex receives $\frac{5}{2}$ from the adjacent vertex.
\item\label{41} Every $4_{1}$-vertex sends $3$ to the adjacent $2$-vertex.
\item\label{42} Every $4_{2}$-vertex sends $\frac{3}{2}$ to every adjacent $2$-vertex.
\item\label{43} Every $4_{3}$-vertex sends $1$ to every adjacent $2$-vertex.
\item\label{5+2} If a $2$-vertex $v$ is adjacent to a $5^{+}$-vertex $u$ and a $2$-vertex, then $v$ receives $2$ from $u$.
\item\label{423} If a $2$-vertex $v$ is adjacent to a $4_{2}$-vertex $u$ and a $3_{1}$-vertex $w$, then $v$ receives $\frac{1}{2}$ from $w$.
\item\label{5+31} If a $2$-vertex $v$ is adjacent to a $5^{+}$-vertex $u$ and a $3_{1}$-vertex $w$, then $v$ receives $\frac{3}{2}$ from $u$ and $\frac{1}{2}$ from $w$.
\item\label{5+32} If a $2$-vertex $v$ is adjacent to a $5^{+}$-vertex $u$ and a $3_{2}$-vertex $w$, then $v$ receives $2$ from $u$.
\item\label{5+4+} If a $2$-vertex $v$ is adjacent to a $5^{+}$-vertex $u$ and a $4^{+}$-vertex $w$, then $v$ receives $1$ from $u$.
\end{enumerate}

\begin{itemize}%
\item If $f$ is a face incident with $t$ vertices of degree one, then the degree of $f$ is at least $7 + 2t$, and then the final charge of $f$ is at least $(7 + 2t) - 7 - 2t = 0$ by \ref{1face}.

\item If $v$ is a $1$-vertex, then it receives $2$ from its incident face and $\frac{5}{2}$ from the adjacent vertex, and then the final charge is at least $\frac{5}{2} - 7 + 2 + \frac{5}{2} = 0$ by \ref{1face} and \ref{1vertex}.

\item {\bf Let $v$ be a $2$-vertex.} By \autoref{delta1}, the vertex $v$ is not adjacent to $1$-vertices. By \autoref{2--4}, the vertex $v$ is adjacent to a $4^{+}$-vertex.

If $v$ is adjacent to a $4_{1}$-vertex, then the final charge is at least $\frac{5}{2} \times 2 - 7 + 3 = 1$ by \ref{41}. So we may assume that $v$ is not adjacent to $4_{1}$-vertices.

If $v$ is adjacent to a $4^{+}$-vertex $u$ and a $2$-vertex $w$, then according to \autoref{412} and excluded case, the vertex $u$ is a $5^{+}$-vertex, and then the final charge of $v$ is $\frac{5}{2} \times 2 - 7 + 2 = 0$ by \ref{5+2}.

If $v$ is adjacent to a $4$-vertex $u$ and a $3$-vertex $w$, then according to \autoref{No433}, \ref{NoTwo42} and excluded cases, $u$ is a $4_{2}$-vertex and $w$ is a $3_{1}$-vertex, and then the final charge is $\frac{5}{2} \times 2 - 7 + \frac{3}{2} + \frac{1}{2} = 0$ by \ref{42} and \ref{423}.

If $v$ is adjacent to a $5^{+}$-vertex $u$ and a $3_{1}$-vertex $w$, then the final charge is $\frac{5}{2} \times 2 - 7 + \frac{3}{2} + \frac{1}{2} = 0$ by \ref{5+31}.

If $v$ is adjacent to a $5^{+}$-vertex $u$ and a $3_{2}$-vertex $w$, then the final charge is $\frac{5}{2} \times 2 - 7 + 2 = 0$ by \ref{5+32}.

Now, we assume that $v$ is adjacent to two $4^{+}$-vertices. If $v$ is adjacent to two $4_{2}$-vertices, then the final charge is $\frac{5}{2} \times 2 - 7 + \frac{3}{2} + \frac{3}{2} = 1$ by \ref{42}. If $v$ is adjacent to a $4_{2}$-vertex and a $4_{3}$-vertex, then the final charge is $\frac{5}{2} \times 2 - 7 + \frac{3}{2} + 1 = \frac{1}{2}$ by \ref{42} and \ref{43}. If $v$ is adjacent to two $4_{3}$-vertices, then the final charge is $\frac{5}{2} \times 2 - 7 + 1 + 1 = 0$ by \ref{43}. If $v$ is adjacent to a $5^{+}$-vertex, then the final charge is at least $\frac{5}{2} \times 2 - 7 + 1 + 1 = 0$ by \ref{5+4+}.

\item {\bf Let $v$ be a $3$-vertex.} By \autoref{delta1}, the vertex $v$ is not adjacent to $1$-vertices. If $v$ is a $3_{0}$-vertex, then the final charge is $\frac{5}{2} \times 3 - 7 = \frac{1}{2}$. If $v$ is a $3_{1}$-vertex, then the final charge is at least $\frac{5}{2} \times 3 - 7 - \frac{1}{2} = 0$ by \ref{423} and \ref{5+31}. If $v$ is a $3_{2}$-vertex, then the final charge is $\frac{5}{2} \times 3 - 7 = \frac{1}{2}$ by \ref{5+32}.

\item {\bf Let $v$ be a $4$-vertex.} If $v$ is adjacent to a $1$-vertex, then according to \autoref{delta1} and \ref{1vertex}, the final charge is $\frac{5}{2} \times 4 - 7 - \frac{5}{2} = \frac{1}{2}$. So we may assume that $v$ is not adjacent to $1$-vertices. If $v$ is a $4_{1}$-vertex, then the final charge is $\frac{5}{2} \times 4 - 7 - 3 = 0$ by \ref{41}. If $v$ is a $4_{2}$-vertex, then the final charge is $\frac{5}{2} \times 4 - 7 - 2 \times \frac{3}{2} = 0$ by \ref{42}. If $v$ is a $4_{3}$-vertex, then the final charge is $\frac{5}{2} \times 4 - 7 - 3 \times 1 = 0$ by \ref{43}. No $4$-vertex is adjacent to four $2$-vertices. If $v$ is adjacent to four $3^{+}$-vertices, then the final charge is $\frac{5}{2} \times 4 - 7 = 3$.

\item {\bf Let $v$ be a $k$-vertex with $k \geq 5$.} If $v$ is adjacent to at least three $3^{+}$-vertices, then the final charge is at least $\frac{5}{2}k - 7 - \frac{5}{2}(k-3) = \frac{1}{2}$. By \autoref{No33}, we may assume that $v$ is adjacent to one or two $3^{+}$-vertices. If $v$ is adjacent to exactly one $3^{+}$-vertex, then $v$ has the final charge at least $\frac{5}{2}k - 7 - (k - 1) = \frac{3}{2}k - 6 > 0$ by \autoref{U3} and \ref{5+4+}.

Assume that $v$ is adjacent to exactly two $3^{+}$-vertices. Let $\ell$ be the number of adjacent $1$-vertices. If $0 \leq \ell \leq k - 6$, then the final charge of $v$ is at least $\frac{5}{2}k - 7 - \frac{5}{2}\ell - 2(k - 2 - \ell) = \frac{1}{2}(k - \ell) - 3 \geq 0$. By \autoref{U33}, we have that $\ell = k - 5$, and $v$ is adjacent to exactly three $2$-vertices, one of which is adjacent to a $3_{1}$-vertex or a $4^{+}$-vertex (other than $v$). Thus, the final charge is at least $\frac{5}{2}k - 7 - \frac{5}{2}(k - 5) - \frac{3}{2} - 2 - 2 = 0$ by \ref{1vertex}, \ref{5+31}, \ref{5+32} and \ref{5+4+}. \qedhere
\end{itemize}
\end{proof}

\begin{corollary}%
If $G$ is a planar graph with maximum degree at least $4$ and girth at least $7$, then $\chiup_{s}'(G) \leq 3\Delta(G)$.
\end{corollary}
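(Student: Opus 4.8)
The plan is to read this statement off \autoref{EP*} by a single change of viewpoint, so that almost all of the work is already done. I would set $\Delta = \Delta(G)$ and observe that the two formulations match: the hypothesis $\Delta(G) \geq 4$ gives exactly the requirement $\Delta \geq 4$ of \autoref{EP*}, while $G$ trivially has maximum degree \emph{at most} $\Delta(G) = \Delta$. Since $G$ has girth at least $7$, \autoref{EP*} then yields $\chiup_{s}'(G) \leq 3\Delta = 3\Delta(G)$, which is the claim. No new induction or discharging is needed.

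The one point I would check carefully is that the boundary value $4$ is consistent between the two statements, and that nothing in the proof of \autoref{EP*} secretly requires the maximum degree to \emph{equal} $\Delta$ rather than merely be bounded by it; both hold. Indeed, \autoref{EP*} is phrased with a \emph{fixed} bound $\Delta$ (instead of with $\Delta(H)$ on each subgraph) precisely to keep the minimal-counterexample argument self-contained: after deleting a vertex, every proper subgraph still has maximum degree at most $\Delta$ and is colored from the same palette of $3\Delta$ colors, so the inductive step never loses colors. The assumption $\Delta \geq 4$ is genuinely used throughout that proof --- for instance in \autoref{2--4}, where $2\Delta + 2 \leq 3\Delta - 2$ holds exactly when $\Delta \geq 4$ --- which is why the corollary must likewise require $\Delta(G) \geq 4$.

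The main, and indeed only, obstacle is therefore bookkeeping rather than mathematics: reconciling ``maximum degree at least $4$'' in the corollary with ``maximum degree at most $\Delta$, $\Delta \geq 4$'' in \autoref{EP*}. Taking $\Delta = \Delta(G)$ settles this at once, and the corollary follows.
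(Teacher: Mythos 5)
Your proposal is correct and matches the paper's (implicit) derivation exactly: the corollary is read off \autoref{EP*} by taking $\Delta = \Delta(G)$, and your checks that the ``at most $\Delta$'' phrasing and the threshold $\Delta \geq 4$ line up are the only points that need verifying. Nothing further is required.
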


\section{Concluding remark}
We could not find any example regarding the tightness of the upper bounds we have shown. So we expect that the upper bounds might be further improved.  

\vskip 3mm \vspace{0.3cm} \noindent{\bf Acknowledgments.} This project was supported by the National Natural Science Foundation of China (11101125) and partially supported by the Fundamental Research Funds for Universities in Henan.

\end{document}